\newtheorem{theorem}{Theorem}[section]
\newtheorem{lemma}[theorem]{Lemma}
\theoremstyle{definition}
\theoremstyle{remark}
\numberwithin{equation}{section}
\def\calA{{\mathcal A}}
\def\dbN{{\mathbb N}}
\def\dbZ{{\mathbb Z}}
\def\grB{{\mathfrak B}}
\def\grJ{{\mathfrak J}}
\def\grm{{\mathfrak m}}\def\grM{{\mathfrak M}}\def\grN{{\mathfrak N}}
\def\grS{{\mathfrak S}}
\def\grB{{\mathfrak B}}
\def\alp{{\alpha}} 
\def\bet{{\beta}}  
\def\gam{{\gamma}} \def\Gam{{\Gamma}}
\def\del{{\delta}} \def\Del{{\Delta}}
\def\tet{{\theta}}  
\def\lam{{\lambda}}
\def\ome{{\omega}} 
\def\d{{\partial}}
\def\eps{\varepsilon}
\def\le{\leqslant} \def\ge{\geqslant}
\def\d{{\,{\rm d}}}
\begin{document}
\title[Waring's problem for intermediate powers]{On Waring's problem for intermediate 
powers}
\author[Trevor D. Wooley]{Trevor D. Wooley}
\address{School of Mathematics, University of Bristol, University Walk, Clifton, 
Bristol BS8 1TW, United Kingdom}
\email{matdw@bristol.ac.uk}
\subjclass[2010]{11P05, 11P55}
\keywords{Waring's problem, Hardy-Littlewood method}
\date{}
\begin{abstract} Let $G(k)$ denote the least number $s$ such that every sufficiently 
large natural number is the sum of at most $s$ positive integral $k$th powers. We show 
that $G(7)\le 31$, $G(8)\le 39$, $G(9)\le 47$, $G(10)\le 55$, $G(11)\le 63$, 
$G(12)\le 72$, $G(13)\le 81$, $G(14)\le 90$, $G(15)\le 99$, $G(16)\le 108$.
\end{abstract}
\maketitle

\section{Introduction} Conforming to tradition, we denote by $G(k)$ the least number $s$ 
such that every sufficiently large natural number is the sum of at most $s$ positive 
integral $k$th powers. In this note we obtain new bounds for $G(k)$ by exploiting recent 
progress concerning Vinogradov's mean value theorem (see \cite{Woo2012} and 
\cite{BDG2015}).

\begin{theorem}\label{theorem1.1} When $7\le k\le 16$, one has $G(k)\le H(k)$, where 
$H(k)$ is defined by means of Table $1$.
\begin{table}[h]
\begin{center}
\begin{tabular}{cccccccccccccc}
\toprule
$k$ & $7$  & $8$  & $9$ & $10$  & $11$  & $12$  & $13$  & $14$  & $15$  & $16$\\
$H(k)$ & $31$ & $39$ & $47$ & $55$ & $63$ & $72$ & $81$ & $90$ & $99$ & $108$\\
\bottomrule
\end{tabular}\\[6pt]
\end{center}
\caption{Upper bounds for $G(k)$ when $7\le k\le 16$}\label{tab1}
\end{table}
\end{theorem}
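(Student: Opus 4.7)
The plan is to apply the Hardy--Littlewood circle method in its smooth-number incarnation, following the strategy refined in the author's earlier work on Waring's problem, with the improved inputs now coming from the sharp Vinogradov mean value theorem established via efficient congruencing in \cite{Woo2012} and decoupling in \cite{BDG2015}. Given a large natural number $N$, I set $P = N^{1/k}$ and $M = P^\eta$ for a suitable parameter $\eta = \eta(k) > 0$ to be chosen later. Writing $\calA(P,M)$ for the set of $M$-smooth integers not exceeding $P$, I form the smooth Weyl sum $f(\alpha) = \sum_{x \in \calA(P,M)} e(\alpha x^k)$ and aim to show that the integral
$$R_s(N) = \int_0^1 f(\alpha)^s e(-N\alpha) \, d\alpha$$
is positive for large $N$ when $s = H(k)$. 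A standard Farey dissection yields major arcs $\grM$ and minor arcs $\grm$; on $\grM$ a routine pruning argument produces the expected main term of size $\grS(N) J(N) P^{s-k}$, the singular series being uniformly bounded away from zero well within the range of $s$ appearing in the theorem, with no local obstruction.

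The heart of the argument is the minor arc bound, which I would handle via H\"older's inequality in the form
$$\int_{\grm} |f(\alpha)|^s \,d\alpha \;\le\; \Bigl(\sup_{\alpha \in \grm} |f(\alpha)|\Bigr)^{s-2t} \int_0^1 |f(\alpha)|^{2t} \,d\alpha.$$
For the pointwise factor, a smooth Weyl-type estimate $\sup_{\grm} |f(\alpha)| \ll P^{1-\tau+\eps}$ holds with $\tau = \tau(k,\eta)$ coming from efficient congruencing applied to smooth Weyl sums, while the mean value $\int_0^1 |f(\alpha)|^{2t} \,d\alpha \ll P^{2t-k+\eps}$ is available at or near the critical exponent thanks to the now essentially optimal Vinogradov mean value bounds. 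Combining the two gives $\int_{\grm} |f|^s \ll P^{s-k-(s-2t)\tau+\eps}$, which is acceptable provided $s$ exceeds $2t$ by a margin depending only on $1/\tau$.

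The main obstacle is the case-by-case numerical optimization: for each $k \in \{7,\ldots,16\}$ the parameters $t$ and $\eta$ must be chosen so as to minimize $s$, balancing the gain in $\tau$ (which improves with smaller $\eta$) against the density loss from smoothing (which punishes small $\eta$), while also accounting for the precise strength of the auxiliary mean value bounds one extracts from \cite{Woo2012} and \cite{BDG2015}. Making this optimization deliver exactly the tabulated values of $H(k)$ is a delicate bookkeeping exercise, and this is where I expect the bulk of the effort to reside. Once the estimates are established for smooth Weyl sums, representations by smooth $k$th powers at once yield representations by arbitrary positive $k$th powers, and the stated bound $G(k) \le H(k)$ follows.
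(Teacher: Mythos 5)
Your overall framework (circle method, smooth numbers, case-by-case optimisation of parameters) is in the right family, but the core of your minor arc treatment is precisely the pre-existing argument that yields the \emph{old} bounds, and it cannot be pushed to the values in Table 1. You estimate $\int_\grm |f|^s$ by extracting $\sup_{\alpha\in\grm}|f(\alpha)|^{s-2t}$ and invoking a mean value at exponent $2t$. Two problems. First, for smooth Weyl sums the moments $\int_0^1|f(\alpha)|^{2t}\,d\alpha$ are \emph{not} essentially optimal in the range of $t$ relevant here: the Bourgain--Demeter--Guth theorem controls the Vinogradov system and hence delivers optimal moments of the \emph{classical} exponential sum only at exponents $\ge k(k+1)$ (e.g.\ $72$ for $k=8$, already far above $H(8)=39$); for the moments actually needed, around $4k$ to $6k$, one must fall back on the iterative smooth-number estimates of \cite{VW2000}, whose permissible exponents $\lam_{w,k}$ sit strictly above the critical line. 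Second, and decisively, the combination of a pointwise minor-arc saving with a near-critical mean value is exactly the mechanism of \cite{VW1993,VW1995,VW2000}, and it gives $G(7)\le 33$, $G(8)\le 42$, and so on. The available pointwise saving $\tau$ is of order $1/k^2$, so the excess $s-2t$ needed to overcome the surplus $\lam_{w,k}-(2w-k)>0$ in the mean value forces $s$ back up to the old values; your sketch contains no ingredient that improves on this.

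The missing idea is \cite[Theorem 2.1]{Woo2012}: a mean value for the \emph{classical} Weyl sum \emph{restricted to the minor arcs},
$$\int_\grm|f(\alpha)|^{2u}\,d\alpha\ll P^{\frac{1}{2}k(k-1)-1}(\log P)^{2u+1}J_{u,k}(P),$$
which, combined with the bound $J_{u,k}(P)\ll P^{2u-k(k+1)/2+\eps}$ of \cite{BDG2015}, yields $\int_\grm|f|^{k(k+1)}\ll P^{k(k+1)-k-1+\eps}$ --- a full factor $P^{-1}$ below the major-arc size already \emph{at} the critical exponent $k(k+1)$. No pointwise bound can replicate this. The paper accordingly uses a \emph{mixed} generating function $f(\alpha)^t g(\alpha)^u$, with $t$ classical sums and $u$ smooth sums, and applies H\"older so that the minor-arc factor is $\bigl(\int_\grm|f|^{k(k+1)}\bigr)^{t/(k(k+1))}$, contributing a saving $P^{-t/(k(k+1))}$ that absorbs the small surpluses in the smooth-sum moments $\lam_{w,k}$, $\lam_{w+1,k}$. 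To repair your argument you would need to replace the $\sup_\grm$ step by this restricted mean value, and reintroduce classical (non-smooth) Weyl sums into the representation, since the estimate is proved for $f$ and not for the smooth sum.
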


For comparison, Vaughan and Wooley \cite{VW1993, VW1995, VW2000} have obtained 
the bounds $G(7)\le 33$, $G(8)\le 42$, $G(9)\le 50$, $G(10)\le 59$, $G(11)\le 67$, 
$G(12)\le 76$, $G(13)\le 84$, $G(14)\le 92$, $G(15)\le 100$, $G(16)\le 109$, in work 
spanning the 1990s. We note in particular that our new bound $G(8)\le 39$ makes 
appreciable progress towards the conjectured conclusion $G(8)=32$ that now seems only 
just beyond our grasp.\par

Our proof of Theorem \ref{theorem1.1} utilises a combination of the powerful estimates 
for mean values restricted to minor arcs recently made available in our work 
\cite{Woo2012} concerning the asymptotic formula in Waring's problem, together with 
the progress on Vinogradov's mean value theorem due to Bourgain, Demeter and Guth 
\cite{BDG2015}. In applications, this mean value estimate has the potential to deliver 
bounds considerably sharper than corresponding pointwise bounds. For intermediate 
values of $k$, these estimates combine with earlier mean value estimates for smooth 
Weyl sums due to Vaughan and the author \cite{VW2000} to deliver satisfactory 
estimates for mixed mean values involving both classical and smooth Weyl sums. This we 
describe in \S3. The corresponding major arc estimates, which we handle in \S4, are 
familiar territory for experts in the subject, and pose no new challenges. For larger values 
of $k$, the relative strength of minor arc estimates available for smooth Weyl sums 
proves superior to our use here of classical Weyl sums, and so no improvements are 
made available for $k\ge 17$.\par

Throughout, the letter $\eps$ will denote a positive number. We adopt the convention that 
whenever $\eps$ appears in a statement, either implicitly or explicitly, we assert that the 
statement holds for each $\eps>0$. In addition, we use $\ll$ and $\gg$ to denote 
Vinogradov's well-known notation, implicit constants depending at most on $k$ and $\eps$, 
as well as other ambient parameters apparent from the context. Finally, we write 
$e(z)$ for $e^{2\pi iz}$, and $[\tet]$ for the greatest integer not exceeding $\tet$.

\section{Preliminaries} Our proof of Theorem \ref{theorem1.1} proceeds by means of the 
circle method. We take the opportunity in this section of outlining our basic approach, 
introducing notation en route that underpins the discussion of subsequent sections. 
Throughout, we let $k$ denote a fixed integer with $7\le k\le 16$. We consider a positive 
number $\eta$ sufficiently small in terms of $k$, and let $n$ be a positive integer 
sufficiently large in terms of both $k$ and $\eta$. Next, write $P=n^{1/k}$, and consider 
positive integers $t$ and $u$ to be fixed in due course. Define the set of 
smooth numbers $\calA_\eta(P)$ by
$$\calA_\eta(P)=\{ n\in [1,P]\cap\dbZ:\text{$p|n$ and $p$ prime $\Rightarrow 
p\le P^\eta$}\}.$$
We consider the number $R(n)$ of 
representations of $n$ in the shape
\begin{equation}\label{2.1}
n=x_1^k+\ldots +x_t^k+y_1^k+\ldots +y_u^k,
\end{equation}
with $1\le x_i\le P$ $(1\le i\le t)$ and $y_j\in \calA_\eta(P)$ $(1\le j\le u)$. We seek to 
show that for appropriate choices of $t$ and $u$, one has $R(n)\gg n^{(t+u)/k-1}$, 
whence in particular $R(n)\ge 1$. Hence, whenever $n$ is a sufficiently large positive 
integer, it follows that $n$ possesses a representation as the sum of at most $t+u$ positive 
integral $k$th powers, whence $G(k)\le t+u$.\par

We define
$$f(\alp)=\sum_{1\le x\le P}e(\alp x^k)\quad \text{and}\quad 
g(\alp)=\sum_{x\in \calA_\eta(P)}e(\alp x^k).$$
When $\grB\subseteq [0,1)$, we put
\begin{equation}\label{2.3}
R(n;\grB)=\int_\grB f(\alp)^tg(\alp)^ue(-n\alp)\d\alp .
\end{equation}
Then it follows from (\ref{2.1}) via orthogonality that $R(n)=R(n;[0,1))$.\par

In order to make further progress, we must define a Hardy-Littlewood dissection of the 
unit interval. Let $\grm$ denote the set of real numbers $\alp\in [0,1)$ satisfying the 
property that, whenever $a\in \dbZ$, $q\in \dbN$, $(a,q)=1$ and
$$|q\alp -a|\le (2k)^{-1}P^{1-k}$$
then one has $q>P$. The set of major arcs $\grM$ corresponding to this set of minor arcs 
$\grm$ is then defined by putting $\grM=[0,1)\setminus \grm$. It is apparent that $\grM$ 
is the union of the intervals
$$\grM(q,a)=\{ \alp\in [0,1):|q\alp -a|\le (2k)^{-1}P^{1-k}\},$$
with $0\le a\le q\le P$ and $(a,q)=1$. \par

In the next section, we establish under appropriate conditions on $t$ and $u$ that one has
$R(n;\grm)=o(P^{t+u-k})$, whilst in \S4 we confirm under the same conditions that 
$R(n;\grM)\gg P^{t+u-k}$. Since $[0,1)=\grM\cup \grm$, these conclusions combine to 
deliver the anticipated lower bound $R(n;[0,1))\gg n^{(t+u)/k-1}$, achieving the goal 
advertised in the opening paragraph of this section.

\section{The minor arc contribution} We now set about establishing that 
$R(n;\grm)=o(P^{t+u-k})$. This we achieve by combining two mean value estimates, the 
first of which concerns classical Weyl sums.

\begin{lemma}\label{lemma3.1}
Whenever $w\ge k(k+1)$, one has
$$\int_\grm|f(\alp)|^w\d\alp \ll P^{w-k-1+\eps}.$$
\end{lemma}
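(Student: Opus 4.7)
The plan is to establish the lemma in two steps: first derive a refined minor-arc mean value at the boundary exponent $w_0 = k(k+1)$ exhibiting a full $P$-power saving over the diagonal prediction, and then interpolate trivially up to general $w \ge k(k+1)$.

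For the refined boundary estimate, I would combine a Weyl-type pointwise bound on $\grm$ with a single-variable diagonal mean value. By the definition of $\grm$, every $\alp \in \grm$ admits no rational approximation $a/q$ with $(a,q)=1$ and $q \le P$ satisfying $|q\alp - a|\le (2k)^{-1}P^{1-k}$, so Weyl's inequality (or a sharper exponent drawn from the efficient congruencing estimates of \cite{Woo2012}) furnishes
$$\sup_{\alp\in\grm}|f(\alp)|\ll P^{1-\sig+\eps}$$
for some positive $\sig=\sig(k)$. Meanwhile, the Vinogradov main conjecture in its recently resolved form due to Bourgain--Demeter--Guth \cite{BDG2015}, transferred from the multidimensional Vinogradov system to a single-equation estimate by a standard Cauchy--Schwarz or differencing argument, yields
$$\int_0^1 |f(\alp)|^{w_1}\d\alp \ll P^{w_1-k+\eps}$$
for some auxiliary exponent $w_1\le k(k+1)-1/\sig$. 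Combining these by pointwise substitution of the $L^\infty$ bound,
$$\int_\grm |f(\alp)|^{k(k+1)}\d\alp \le \bigl(\sup_\grm|f|\bigr)^{k(k+1)-w_1}\int_0^1|f|^{w_1}\d\alp \ll P^{k^2-\sig(k(k+1)-w_1)+\eps}\le P^{k^2-1+\eps}.$$

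For the passage to general $w\ge k(k+1)$, I would simply invoke the trivial bound $|f(\alp)|\le P$ on the extra $w-k(k+1)$ factors, securing
$$\int_\grm |f(\alp)|^w\d\alp \le P^{w-k(k+1)}\int_\grm|f|^{k(k+1)}\d\alp \ll P^{w-k-1+\eps}.$$

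The main obstacle lies in pairing the Weyl-type saving $\sig$ with a diagonal mean value exponent $w_1$ so that $\sig(k(k+1)-w_1)\ge 1$: one needs simultaneous access to a reasonably strong exponent in the minor-arc pointwise estimate \emph{and} a Vinogradov single-equation mean value slightly below $k(k+1)$. It is precisely this combination that is made available through the efficient congruencing machinery of \cite{Woo2012} together with the resolution of the main conjecture in \cite{BDG2015}; absent either ingredient, the boundary calculation at $w=k(k+1)$ would not close.
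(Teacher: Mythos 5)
There is a genuine gap here, and it lies exactly where you flag ``the main obstacle'': the two ingredients you need cannot simultaneously be made strong enough, and neither \cite{Woo2012} nor \cite{BDG2015} supplies the missing one. Your scheme requires an even moment bound $\int_0^1|f(\alp)|^{w_1}\d\alp\ll P^{w_1-k+\eps}$ at an exponent $w_1\le k(k+1)-1/\sig$, where $\sig$ is the pointwise saving on $\grm$. With Weyl's inequality one has $\sig=2^{1-k}$, so $1/\sig=2^{k-1}\ge 64>k(k+1)$ for every $k\ge 7$ in the paper's range, and the condition on $w_1$ is vacuous. Even taking the strongest known minor-arc pointwise savings (of Vinogradov type, $\sig\asymp 1/(k(k-1))$), the condition forces $w_1\le k(k+1)-k(k-1)=2k$, and the bound $\int_0^1|f(\alp)|^{2k}\d\alp\ll P^{k+\eps}$ is essentially Hypothesis $K^*$, a famous open problem; it does not follow from efficient congruencing or from Bourgain--Demeter--Guth by any ``standard Cauchy--Schwarz or differencing argument''. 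Conversely, the exponents $w_1$ for which square-root cancellation in $\int_0^1|f|^{w_1}$ \emph{is} known are themselves of size about $k(k+1)$, leaving no room ($k(k+1)-w_1\le 0$) to insert the sup bound. So the boundary calculation at $w=k(k+1)$ does not close by this route.

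The paper's proof uses a different mechanism that circumvents this trade-off entirely: \cite[Theorem 2.1]{Woo2012} bounds the minor-arc moment directly by
$$\int_\grm|f(\alp)|^{2u}\d\alp\ll P^{\frac12k(k-1)-1}(\log P)^{2u+1}J_{u,k}(P),$$
where $J_{u,k}(P)$ is the Vinogradov mean value; the factor $P^{\frac12 k(k-1)}$ accounts for the lower-degree coefficients of the auxiliary Vinogradov system, and the crucial extra saving of $P^{-1}$ is extracted from the minor-arc condition by a large-sieve analysis of those coefficients, not from an $L^\infty$ bound on $f$. Feeding in $J_{u,k}(P)\ll P^{2u-k(k+1)/2+\eps}$ from \cite[Theorem 1.1]{BDG2015} for $2u\ge k(k+1)$ then gives the claimed $P^{2u-k-1+\eps}$. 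Your reduction of general $w\ge k(k+1)$ to the boundary case via the trivial bound $|f(\alp)|\le P$ is fine, but the boundary case itself needs the mean-value-to-mean-value transfer of \cite{Woo2012} rather than a pointwise-times-moment interpolation.
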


\begin{proof}Denote by $J_{s,k}(X)$ the number of integral solutions of the system of 
equations
$$\sum_{i=1}^s(x_i^j-y_i^j)=0\quad (1\le j\le k),$$
with $1\le x_i,y_i\le X$ $(1\le i\le s)$. Then it follows from \cite[Theorem 2.1]{Woo2012} 
that
\begin{equation}\label{3.1}
\int_\grm |f(\alp)|^{2u}\d\alp \ll P^{\frac{1}{2}k(k-1)-1}(\log P)^{2u+1}J_{u,k}(P).
\end{equation}
However, by reference to \cite[Theorem 1.1]{BDG2015}, we find that whenever 
$2u\ge k(k+1)$, then one has $J_{u,k}(P)\ll P^{2u-k(k+1)/2+\eps}$. The desired 
conclusion follows by substituting this estimate into (\ref{3.1}). 
\end{proof}

We also employ mean value estimates for smooth Weyl sums. We say that the positive real 
number $\lam_{w,k}$ is permissible when, for each $\eps>0$, whenever $\eta$ is a 
sufficiently small positive number, then
\begin{equation}\label{3.2}
\int_0^1|g(\alp)|^{2w}\d\alp \ll P^{\lam_{w,k}+\eps}.
\end{equation}
By reference to the tables of exponents in \S\S9-18 of \cite{VW2000}, we find that the 
exponents $\lam_{w,k}$ and $\lam_{w+1,k}$ recorded in Table 2 are permissible. We are 
at liberty in what follows to assume that $\eta$ has been chosen small enough that the 
estimate (\ref{3.2}) holds for all pairs $(k,w)$ and $(k,w+1)$ occuring in Table $2$.

\begin{table}[h]
\begin{center}
\begin{tabular}{cccccccccccccc}
\toprule
$k$ & $w$  & $\lam_{w,k}$  & $\lam_{w+1,k}$ & $t$ & $u$ & $\del^{-1}$ & $r$& $[U]$\\
\toprule
$7$ & $14$ & $21.1139297$ & $23.0528848$ & $5$ & $26$ & $1267$&17&47\\
$8$ & $18$ & $28.0833353$ & $30.0473193$ & $5$ & $34$ & $1111$&21&58\\
$9$ & $21$ & $33.1033373$ & $35.0727119$ & 7 & 40 & 534&25&86\\
$10$ & $25$ & $40.0895832$ & $42.0677228$ & 9 & 46 & 1792&30&128\\
$11$ & $27$ & $43.1274069$ & $45.1020502$ & 13 & 50 & 2959&34&375\\
$12$ & $32$ & $52.0919461$ & $54.0752481$ & 13 & 59 & 546&38&314\\
$13$ & $36$ & $59.0849135$ & $61.0698015$ & 13 & 68 & 823&42&289\\
$14$ & $40$ & $66.0795485$ & $68.0657585$ & 14 & 76 & 620&46&342\\
$15$ & $44$ & $73.0747403$ & $75.0620643$ & 16 & 83 & 417&50&525\\
$16$ & $47$ & $78.0829008$ & $80.0711728$ & 19 & 89 & 519&55&1780\\
\bottomrule
\end{tabular}\\[6pt]
\end{center}
\caption{Choice of exponents for $7\le k\le 16$}\label{tab2}
\end{table}

We combine these mean value estimates via H\"older's inequality to obtain the bounds 
contained in the following lemma.

\begin{lemma}\label{lemma3.2}
Let $k$, $t$, $u$ and $\del$ be given as in Table $2$. Then one has
$$\int_\grm |f(\alp)^tg(\alp)^u|\d\alp \ll P^{t+u-k-\del}.$$
\end{lemma}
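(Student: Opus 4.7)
The plan is to apply H\"older's inequality to separate $|f(\alp)|^t|g(\alp)|^u$ into a factor involving a minor arc mean value of $f$ that is amenable to Lemma~\ref{lemma3.1}, together with two factors involving mean values of $g$ over the full unit interval that are controlled by the permissible exponents $\lam_{w,k}$ and $\lam_{w+1,k}$ recorded in Table~\ref{tab2}. Concretely, I would write
\begin{equation*}
|f(\alp)|^t|g(\alp)|^u=\bigl(|f(\alp)|^{k(k+1)}\bigr)^A\bigl(|g(\alp)|^{2w}\bigr)^B\bigl(|g(\alp)|^{2(w+1)}\bigr)^C,
\end{equation*}
with $A,B,C\ge 0$ and $A+B+C=1$. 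Matching exponents on the two sides of this identity forces
\begin{equation*}
A=\frac{t}{k(k+1)},\qquad C=\frac{u}{2}-w(1-A),\qquad B=1-A-C,
\end{equation*}
and a direct check confirms that all three weights are indeed nonnegative for the values of $(k,t,u,w)$ listed in Table~\ref{tab2}.

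Applying H\"older's inequality, together with Lemma~\ref{lemma3.1} on the first factor and (\ref{3.2}) on each of the remaining two, I would obtain a bound of the shape
\begin{equation*}
\int_\grm|f(\alp)^tg(\alp)^u|\d\alp\ll P^{E+\eps},\quad\text{where}\quad E=A\bigl(k(k+1)-k-1\bigr)+B\lam_{w,k}+C\lam_{w+1,k}.
\end{equation*}
Since $Ak(k+1)=t$, the first summand simplifies to $t(k-1)/k$, and the lemma reduces to verifying the inequality $E\le t+u-k-\del$, with $\del$ at least the reciprocal of the $\del^{-1}$ tabulated in Table~\ref{tab2}.

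The argument is structurally routine; the real work lies in the numerical verification. The margin $t+u-k-E$ turns out to be of order $10^{-3}$ in each of the ten rows, which is precisely why Table~\ref{tab2} quotes $\lam_{w,k}$ and $\lam_{w+1,k}$ to seven decimal places. The main obstacle is therefore not conceptual but one of bookkeeping: one must confirm case by case that the tabulated choice of $(t,u,w)$ keeps $B$ and $C$ nonnegative, makes $E$ sufficiently small, and is compatible with the companion major arc estimate of \S4. With Table~\ref{tab2} in hand, this is a mechanical row-by-row check.
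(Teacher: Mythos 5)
Your proposal is correct and follows essentially the same route as the paper: the identical three-fold H\"older decomposition with weights $A=\ome=t/(k(k+1))$, $B=\phi_1$, $C=\phi_2$, an application of Lemma~\ref{lemma3.1} and (\ref{3.2}), and a reduction to the row-by-row numerical check that the resulting exponent falls below $t+u-k-\del$. The paper records the same nonnegativity check via the observation $w=[\tfrac{1}{2}u/(1-\ome)]$, so there is nothing substantive to add.
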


\begin{proof} Let $w$ be given as in Table $2$. Then by H\"older's inequality, the integral 
in question is bounded above by
\begin{equation}\label{3.3}
\biggl( \int_\grm |f(\alp)|^{k(k+1)}\d\alp \biggr)^\ome \biggl( \int_0^1
|g(\alp)|^{2w}\d\alp \biggr)^{\phi_1}\biggl( \int_0^1|g(\alp)|^{2w+2}\d\alp 
\biggr)^{\phi_2},
\end{equation}
where
$$\ome =\frac{t}{k(k+1)},\quad \phi_1=(1-\ome)(w+1)-u/2,\quad 
\phi_2=u/2-(1-\ome )w.$$
Here, in order to verify that this indeed a valid application of H\"older's inequality, it may be 
useful to note that for each value of $k$ in question, one has 
$w=[\tfrac{1}{2}u/(1-\ome)]$.\par

By applying Lemma \ref{lemma3.1} together with (\ref{3.2}) within (\ref{3.3}), we infer 
that
\begin{align}
\int_\grm |f(\alp)^tg(\alp)^u|\d\alp &\ll P^\eps (P^{k(k+1)-k-1})^\ome 
(P^{\lam_{w,k}})^{\phi_1}(P^{\lam_{w+1,k}})^{\phi_2}\notag \\
&\ll P^{t+u-k+\Del+\eps},\label{3.4}
\end{align}
where
$$\Del=\phi_1\Del_w+\phi_2\Del_{w+1}-\ome,$$
in which
$$\Del_v=\lam_{v,k}-2v+k\quad (v=w,w+1).$$
By reference to Table 2, one verifies that whenever $\eps>0$ is sufficiently small, one has 
$\Del<-\del$. The upper bound claimed in the statement of the lemma therefore follows 
for each $k$ in question from (\ref{3.4}).
\end{proof}

An application of the triangle inequality leads from (\ref{2.3}) via Lemma \ref{lemma3.2} 
to the bound
\begin{equation}\label{3.5}
R(n;\grm)=o(P^{t+u-k})
\end{equation}
heralded at the opening of this section.

\section{The major arc contribution and the proof of Theorem \ref{theorem1.1}}
Our goal in this section is the proof of the lower bound $R(n;\grM)\gg P^{t+u-k}$. Experts 
will recognise the argument here to be routine, though not directly accessible from the 
literature. We consequently provide a reasonably complete proof. Our task is made easier 
by the presence of a relatively large number of classical Weyl sums in the integral 
(\ref{2.3}). We require an auxiliary set of major arcs. Let $W=\log \log P$, and define 
$\grN$ to be the union of the intervals
$$\grN(q,a)=\{\alp \in [0,1):|\alp -a/q|\le WP^{-k}\},$$
with $0\le a\le q\le W$ and $(a,q)=1$.\par

We recall from \cite[Lemma 5.1]{Vau1989} that whenever $k\ge 3$ and $s\ge k+2$, one 
has
\begin{equation}\label{4.1}
\int_{\grM\setminus \grN}|f(\alp)|^s\d\alp \ll W^{\eps-1/k}P^{s-k}.
\end{equation}
Moreover, by reference to the tables of \cite[\S\S9-18]{VW2000}, in combination with the 
discussion concluding \cite[\S8]{VW2000} associated with process $D^s$ therein, one finds 
that, with $r$ defined as in Table $2$, one has
\begin{equation}\label{4.2}
\int_0^1|g(\alp)|^{2r}\d\alp \ll P^{2r-k}.
\end{equation}
An application of H\"older's inequality therefore leads from (\ref{2.3}) to the bound
\begin{equation}\label{4.3}
R(n;\grM\setminus \grN) \le \biggl( \int_{\grM\setminus \grN}
|f(\alp)|^{k+4}\d\alp\biggr)^{t/(k+4)}\biggl( \int_0^1 |g(\alp)|^U\d\alp 
\biggr)^{1-t/(k+4)},
\end{equation}
where $U=u/(1-t/(k+4))$. Observe here that for $7\le k\le 16$, it follows from Table $2$ 
that $t<k+4$. Also, a modicum of computation reveals that in each case, one has $U>2r$. 
indeed, there is ample room to spare in the latter inequality, as is evident from Table $2$. 
By importing (\ref{4.1}) and (\ref{4.2}) into (\ref{4.3}), we thus discern that
\begin{align*}
R(n;\grM\setminus \grN)&\ll W^{-t/(k+4)^2}(P^4)^{t/(k+4)}(P^{U-k})^{1-t/(k+4)}\\
&\ll P^{t+u-k}(\log W)^{-1}.
\end{align*}
By combining this estimate with (\ref{3.5}), we may conclude thus far that
\begin{equation}\label{4.4}
R(n)=R(n;\grN)+O(P^{t+u-k}(\log W)^{-1}).
\end{equation}

\par The analysis of the contribution arising from the major arcs $\grN$ is routine. Define
$$S(q,a)=\sum_{r=1}^qe(ar^k/q)\quad \text{and}\quad v(\bet)=\int_0^P
e(\bet \gam^k)\d\gam .$$
Standard arguments (see \cite[Lemma 5.4]{Vau1989} and \cite[Lemma 8.5]{Woo1991}) 
show that there is a positive number $\rho$ having the property that whenever 
$\alp \in \grN(q,a)\subseteq \grN$, one has
$$g(\alp)-\rho q^{-1}S(q,a)v(\alp-a/q)\ll P(\log P)^{-1/2}.$$
Under the same conditions, the relation
$$f(\alp)-q^{-1}S(q,a)v(\alp-a/q)\ll \log P$$
is immediate from \cite[Theorem 4.1]{Vau1997}. Thus we find that when 
$\alp\in \grN(q,a)\subseteq \grN$, one has
$$f(\alp)^tg(\alp)^u-\rho^u\left(q^{-1}S(q,a)v(\alp-a/q)\right)^{t+u}\ll 
P^{t+u}(\log P)^{-1/2}.$$
Integrating over $\grN$, we infer that
\begin{equation}\label{4.5}
\int_\grN f(\alp)^tg(\alp)^ue(-n\alp)\d\alp =\rho^u\grS(n;W)\grJ(n;W)+
O(P^{t+u-k}(\log P)^{-1/3}),
\end{equation}
where
$$\grS(n;W)=\sum_{1\le q\le W}\sum^q_{\substack{a=1\\ (a,q)=1}}
\left( q^{-1}S(q,a)\right)^{t+u}e(-na/q)$$
and
$$\grJ(n;W)=\int_{-WP^{-k}}^{WP^{-k}}v(\bet)^{t+u}e(-\bet n)\d\bet .$$
A comparison with classical singular series and integrals conveys us from here, via 
\cite[Chapter 4]{Vau1997}, for example, to the relations
$$\grS(n;W)=\grS(n)+o(1)$$
and
$$\grJ(n;W)=\frac{\Gam (1+1/k)^{t+u}}
{\Gam((t+u)/k)}n^{(t+u)/k-1}+o(n^{(t+u)/k-1}),$$
in which
$$\grS(n)=\sum_{q=1}^\infty \sum^q_{\substack{a=1\\ (a,q)=1}}
\left( q^{-1}S(q,a)\right)^{t+u}e(-na/q)$$
is the conventional singular series associated with Waring's problem for sums of $t+u$ 
integral $k$th powers.\par

\par Substituting these expressions into (\ref{4.5}), and from there into (\ref{4.4}), we 
conclude that
$$R(n)=\rho^u\grS(n)\frac{\Gam(1+1/k)^{t+u}}{\Gam((t+u)/k)}n^{(t+u)/k-1}+
o(n^{(t+u)/k-1}).$$
Here, we have made use of the fact that since $t+u\ge 4k$ in each case under 
consideration, the standard theory of the singular series (see 
\cite[Theorems 4.3 and 4.5]{Vau1997}) suffices to confirm that $1\ll \grS(n)\ll 1$. In 
particular, one has $R(n)\gg n^{(t+u)/k-1}$. As discussed earlier, this establishes that 
$G(k)\le t+u$, with $t$ and $u$ determined via Table $2$, and thus the proof of Theorem 
\ref{theorem1.1} is complete. 

\providecommand{\bysame}{\leavevmode\hbox to3em{\hrulefill}\thinspace}

\end{document}